\documentclass[12pt,letterpaper]{amsart}
\makeatletter
\@namedef{subjclassname@2020}{%
\textup{2020} Mathematics Subject Classification}
\makeatother
\usepackage{geometry,xcolor,tikz}
\geometry{body={6.7in,9in}, centering}
\usepackage{mathrsfs,latexsym,url,setspace}
\usepackage{mathpazo} 
\usepackage[colorlinks,allcolors=blue]{hyperref} 
\usepackage{xpatch} 
\xpatchcmd{\proof} 
{\itshape} 
{\bfseries} 
{}
{}
\newtheorem{theorem}{Theorem}
\newtheorem{theoremA}{Theorem}

\newtheorem*{theorem*}{Theorem*}
\newtheorem{proposition}{Proposition}
\newtheorem{lemma}{Lemma}
\newtheorem{corollary}{Corollary}
\theoremstyle{remark}

\newtheorem{example}{Example}
\newcommand{\C}{\mathbb{C}}
\newcommand{\disk}{\mathbb{D}}
\newcommand{\D}{\Omega}
\newcommand{\ep}{\varepsilon}
\newcommand{\zb}{\overline{z}}
\usepackage{soul}
\onehalfspace
\allowdisplaybreaks 

\title{Compactness of composition operators on the Bergman space of the bidisc}

\author{Timothy G. Clos}
\address[Timothy G. Clos]{Kent State University, Department of Mathematical 
	Sciences, Kent, OH 44242, USA}
\email{tclos@kent.edu}

\author{\v{Z}eljko \v{C}u\v{c}kovi\'c}
\address[\v{Z}eljko \v{C}u\v{c}kovi\'c]{University of Toledo, 
Department of Mathematics \& Statistics, Toledo, OH 43606, USA}
\email{Zeljko.Cuckovic@utoledo.edu}

\author{S\"{o}nmez \c{S}ahuto\u{g}lu}
\address[S\"{o}nmez \c{S}ahuto\u{g}lu]{University of Toledo, 
	Department of Mathematics \& Statistics, Toledo, OH 43606, USA}
\email{sonmez.sahutoglu@utoledo.edu}

\subjclass[2020]{Primary  47B33; Secondary 32A36}
\keywords{Composition operator, bidisc, compact, Bergman space}
\date{\today}
\begin{document}

\begin{abstract}
Let $\varphi$ be a holomorphic self-map of the bidisc that is Lipschitz 
on the closure. We show that the composition operator $C_{\varphi}$ 
is compact on the Bergman space if and only if 
$\varphi(\overline{\mathbb{D}^2})\cap \mathbb{T}^2=\emptyset$ 
and  $\varphi(\overline{\mathbb{D}^2}\setminus \mathbb{T}^2)
\cap b\mathbb{D}^2=\emptyset$. In the last section of the paper, 
we prove a result on $C^2$-smooth bounded pseudoconvex 
domains in $\mathbb{C}^n$. 
\end{abstract}
\maketitle

Let $\Omega$ be a domain in $\mathbb{C}^n$ and suppose 
$\varphi:\Omega\rightarrow \Omega$ is holomorphic.  Then the composition 
operator with symbol $\varphi$, acting on the space of all holomorphic 
functions on $\Omega$, is defined by 
\[C_{\varphi}f(z)=f\circ \varphi(z)\] 
for $z\in \Omega$. Composition operators have been studied extensively 
by many authors on different spaces of holomorphic functions and on various 
domains.  Some of the more common spaces studied in one complex dimension 
include the Hardy and Bergman spaces on the unit disc, and the Segal-Bargmann 
space on $\mathbb{C}$.  Sometimes weighted versions of these spaces are 
studied with various weights.  In several complex dimensions, the common 
spaces  studied are the Hardy and Bergman spaces of the polydisc, and 
the Bergman space of the unit ball.  

In this paper we are interested in studying compactness of composition 
operators on the Bergman space on the bidisc $\mathbb{D}^2$, 
where $\mathbb{D}$ is the open unit disc in $\mathbb{C}$.  
We expect the higher dimensional results to be more complicated.  

Recall that the Bergman space $A^2(\mathbb{D}^n)$ is the space of all 
holomorphic functions $f: \mathbb{D}^n \rightarrow \mathbb{C}$ for which
\[\int_{\mathbb{D}^n} |f(z)|^2 dV(z) <\infty\]
where $dV$ denotes the volume measure on $\disk^n$. 
It is well known that the point evaluation functional is linear and 
bounded on the Bergman space on any domain $\Omega\subset \mathbb{C}^n$ 
and for any $n\in \mathbb{N}$.  Therefore, by the Riesz representation 
theorem, there exists $K_a\in A^2(\Omega)$ so that 
$f(a)=\langle f, K_a\rangle$ for any $f\in A^2(\Omega)$ and $a\in \Omega$.  
This special function $K_a$ is called the Bergman kernel.  When we 
need to be specific, we will use the notation $K^U$ to denote the 
Bergman kernel of $U$. One also has that
\[K_a(a)=\|K_a\|^2,\] 
$K_a(z)$ is holomorphic in $z$, and is conjugate holomorphic in $a$.

We also define the normalized Bergman kernel $k_a$ as
\[k_a(z)=\frac{K_a(z)}{\|K_a\|}=\frac{K_a(z)}{\sqrt{K_a(a)}}.\]   
The most fundamental questions are the boundedness and compactness of 
composition operators.  In the case $n = 1$, every composition operator is 
bounded as a consequence of the Littlewood Subordination theorem.  
MacCluer and Shapiro have characterized compactness of 
$C_{\varphi} $ on $ A^2(\mathbb{D})$ in \cite{MacCluerShapiro1986}.  
They proved that for $ \varphi : \mathbb{D} \rightarrow \mathbb{D}$ 
holomorphic, $C_{\varphi}$ is compact if and only if 
\[ \lim_{|z|\to 1^-} \frac{1 - |z|^2}{1 - |\varphi(z)|^2} = 0.\]  
Using the well known Julia-Caratheodory theorem, this result can be 
rephrased as $ C_{\varphi}$ is compact on $A^2(\mathbb{D})$ if and only if 
$\varphi$ does not have a finite angular derivative at any point on the 
boundary of $\mathbb{D}$. We recommend the book 
\cite{CowenMacCluerBook} for more information about 
composition operators.  We also mention related results in 
\cite{CuckovicZhao2004,GorkinMacCluer2004}. 

The situation is quite different for composition operators on domains in 
$\mathbb{C}^n$ for $n\geq 2$.  We refer the reader to the papers  
\cite{CuckovicZhao2007,HaKhoi2019,KooLi2014,KooLi2016,KooSmith2007}, 
and \cite{Li1995}.

In case of the polydisc $\mathbb{D}^n$, Jafari \cite{Jafari1990} characterized 
boundedness and compactness of these operators in terms of the following 
Carleson measure condition.  First we define the Carleson square 
$S^{\theta}_r$ as follows
\[S^{\theta}_r=\left\{\xi\in\disk:1-r<|\xi|<1,|\arg(\xi)-\theta|<r/2\right\}.\]

For any finite positive Borel measure $\lambda $ on $\mathbb{D}^n$,
we say that $\lambda $ is a Carleson measure if there exists $C>0$ such that 
\[\lambda\left(S_{r_1}^{\theta_1}\times\ldots\times S_{r_n}^{\theta_n}\right) 
	\leq CV\left(S_{r_1}^{\theta_1}\times\ldots\times S_{r_n}^{\theta_n}\right)\]
for all $0\leq \theta_j\leq2\pi, 0<r_j<1,$ and $1\leq j\leq n$. 
Furthermore, we say that $\lambda $ is a vanishing Carleson measure if 
\[\lim_{r_j\to 0^+}\sup_{(\theta_1,\ldots,\theta_n)\in [0,2\pi]^n} 
\frac{\lambda\left(S_{r_1}^{\theta_1}\times\cdots
	\times S_{r_n}^{\theta_n}\right)}{ 
V\left(S_{r_1}^{\theta_1}\times\cdots\times S_{r_n}^{\theta_n}\right)}=0\]  
for any $j$. 

The following theorem will be used in the proof of our main result. 
 
\begin{theoremA}\cite[Theorem 5, iii]{Jafari1990}\label{thmjaf}
Let $\varphi:\disk^n\to \disk^n$ be holomorphic, and $1<p<\infty$.  
Define pullback measure $V_{\varphi}$ to be
\[V_{\varphi}(E)=V(\varphi^{-1}(E))\] 
where $E\subset \overline{\disk^n}$.
Then $C_{\varphi}$ is compact on $A^p(\disk^n)$ if and only if
$V_{\varphi}$ is a vanishing Carleson measure. 
\end{theoremA}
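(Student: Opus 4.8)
The plan is to convert the operator statement into a Carleson embedding problem by a change of variables, and then prove the embedding characterization directly on the polydisc. First I would record that, by the definition of the pullback measure,
\[\|C_{\varphi}f\|_{A^p(\disk^n)}^p=\int_{\disk^n}|f\circ\varphi|^p\,dV
=\int_{\overline{\disk^n}}|f|^p\,dV_{\varphi}\]
for every $f\in A^p(\disk^n)$. This identifies $C_{\varphi}$ acting on $A^p(\disk^n)$ with the inclusion $\iota\colon A^p(\disk^n)\to L^p(\overline{\disk^n},V_{\varphi})$ (the symbol enters only through $V_{\varphi}$), so $C_{\varphi}$ is bounded, respectively compact, exactly when $\iota$ is. The theorem thus reduces to a Carleson embedding theorem for $A^p(\disk^n)$: the embedding $\iota$ is bounded if and only if $V_{\varphi}$ is a Carleson measure, and compact if and only if $V_{\varphi}$ is a vanishing Carleson measure.

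Second, I would treat the bounded embedding. For necessity, I would test $\iota$ on functions $f_w$ adapted to a product box $S_{r_1}^{\theta_1}\times\cdots\times S_{r_n}^{\theta_n}$, concretely products of one-variable reproducing-kernel powers whose $A^p$ norm is comparable to the volume of the box and whose modulus is bounded below on the box; the bound $\|f_w\|_{L^p(V_{\varphi})}\leq C\|f_w\|_{A^p}$ then forces $V_{\varphi}(\text{box})\leq C\,V(\text{box})$. For sufficiency, I would use a product dyadic decomposition of $\disk^n$ into such boxes, control $|f|^p$ on each box by its average over a slightly larger box using the sub-mean value property, and sum the resulting local Carleson estimates to obtain $\|f\|_{L^p(V_{\varphi})}\leq C\|f\|_{A^p}$.

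Third, for compactness I would show that $\iota$ is compact if and only if $V_{\varphi}$ is vanishing Carleson. For necessity, the same functions $f_w$ tend to $0$ weakly as a coordinate of $w$ approaches $b\disk$, while their norms stay comparable, so compactness of $\iota$ forces $\|f_w\|_{L^p(V_{\varphi})}\to0$, which is exactly the vanishing of the normalized box measure as some $r_j\to0^+$. For sufficiency, given a bounded sequence $g_k\to0$ weakly in $A^p$ (equivalently, bounded and converging to $0$ uniformly on compact subsets), I would split $\overline{\disk^n}$ into a compact interior region, where uniform convergence kills the integral, and a boundary collar, where the vanishing Carleson condition makes $\int|g_k|^p\,dV_{\varphi}$ uniformly small; an $\ep/2$ argument then yields $\|\iota g_k\|_{L^p(V_{\varphi})}\to0$.

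The main obstacle is the genuinely product nature of the polydisc. In one variable the Carleson embedding is classical and underlies the MacCluer--Shapiro criterion, but here the relevant windows are products $S_{r_1}^{\theta_1}\times\cdots\times S_{r_n}^{\theta_n}$ with independent side-lengths, so both the decomposition and the averaging must be carried out simultaneously in all $n$ directions. The most delicate point is matching the operator-theoretic notion of compactness to the stated vanishing condition, in which each $r_j\to0^+$ separately (the ``for any $j$'' clause): one must design test functions that degenerate in a single variable while remaining bounded and nondegenerate in the others, and confirm that control over these one-directional limits suffices to recover full compactness of $\iota$.
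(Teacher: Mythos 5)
This statement is quoted from Jafari \cite[Theorem 5, iii]{Jafari1990}; the paper itself gives no proof, so there is no internal argument to compare yours against. Your outline is, however, essentially the standard (and essentially Jafari's own) route: the pushforward identity $\int_{\mathbb{D}^n}|f\circ\varphi|^p\,dV=\int|f|^p\,dV_{\varphi}$ reduces everything to the (vanishing) Carleson embedding $A^p(\mathbb{D}^n)\hookrightarrow L^p(dV_{\varphi})$ --- note that since $\|C_{\varphi}f\|=\|\iota f\|$ for every $f$ and both targets are complete, the two operators are simultaneously bounded and simultaneously compact --- and the embedding theorem itself goes back to Hastings' Carleson measure theorem for Bergman spaces of the polydisc, proved exactly as you describe: kernel-power test functions adapted to product boxes for necessity, and a product dyadic decomposition with the sub-mean value inequality for sufficiency. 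The reduction and the necessity half are complete as sketched (weak null convergence of the test family for $1<p<\infty$ follows from norm boundedness plus locally uniform convergence to $0$, since point evaluations are bounded).

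The one place where your sketch leaves a real issue unresolved is the one you flag: the meaning of ``vanishing Carleson'' on a product domain. To run your collar argument in the sufficiency direction you must cover $\{z:1-|z_j|<\delta\}$ by product boxes whose $j$-th side is small but whose remaining sides range over \emph{all} scales $r_i\in(0,1]$, so you need $\sup V_{\varphi}(Q)/V(Q)\to 0$ taken over the angles \emph{and} over the remaining radii as $r_j\to 0^+$; as literally printed in the paper the supremum is only over the angles. This is a defect of the formulation rather than of your argument, but a complete write-up would have to fix the quantifiers before the $\varepsilon/2$ splitting closes. With that interpretation pinned down, your proposal is a correct and self-contained proof of the cited theorem.
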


It is well-known that in many cases boundedness of the composition operator 
is equivalent to a bounded Carleson measure condition and compactness is 
equivalent to a vanishing Carleson condition.  The papers \cite{Kosinski2023} 
and  \cite{Bayart2011} expound on these ideas. Here we also mention a closely 
related work in \cite{Choe1992}.  However, working with pull-back measures 
satisfying a Carleson measure condition is not an easy task.  As a result, in 
recent years, there has been a lot of work done on the boundedness of 
composition operators with symbols smooth up to the boundary acting 
on the Bergman spaces on the polydisc.  We  particularly mention 
\cite{Bayart2011,Kosinski2023, KooStessinZhu2008}, 
as well as \cite{StessinZhu2006}. 
 
We take a somewhat different approach to studying compactness 
of composition  operators on the Bergman space of the polydisc. 
Assuming more symbol regularity up to the closure of the polydisc 
allows us to characterize compactness in terms of the geometry 
of the image of the closure of the polydisc in the boundary.

We let $\|\cdot\|_E$ be the Euclidean norm and we define 
$d_{b\D}(z)=\inf\{\|z-w\|_E:w\in b\D\}$ to be the distance to $b\D$, 
the boundary of $\D$. We denote the unit circle by $\mathbb{T}$. 

We expect the situation on the polydisc to be much more complicated, 
as there are many more cases to consider. However, in case of the bidisc,  
we have the following result. 

\begin{theorem}\label{ThmBidisc}
Let $\varphi=(\varphi_1,\varphi_2):\mathbb{D}^2\to \mathbb{D}^2$ be a 
holomorphic self-map such that both $\varphi_1$ and $\varphi_2$ are Lipschitz 
on $\overline{\mathbb{D}^2}$.  Then   $C_{\varphi}$ is compact on 
$A^2(\mathbb{D}^2)$ if and only if  
$\varphi(\overline{\mathbb{D}^2})\cap \mathbb{T}^2=\emptyset$ and 
$\varphi(\overline{\mathbb{D}^2}\setminus \mathbb{T}^2)
\cap b\mathbb{D}^2=\emptyset$. 
\end{theorem}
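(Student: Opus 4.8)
The plan is to prove both implications through Jafari's characterization (Theorem~\ref{thmjaf}): $C_\varphi$ is compact on $A^2(\disk^2)$ if and only if the pullback measure $V_\varphi$ is a vanishing Carleson measure. Throughout I would exploit two elementary consequences of the hypotheses. First, since $\varphi_1,\varphi_2$ map holomorphically into $\disk$, the maximum principle gives $|\varphi_j|<1$ on $\disk^2$, so the sets $E_j=\{z\in\overline{\disk^2}:|\varphi_j(z)|=1\}$ lie in $b\disk^2$; the face hypothesis $\varphi(\overline{\disk^2}\setminus\mathbb{T}^2)\cap b\disk^2=\emptyset$ then forces $E_j\subset\mathbb{T}^2$. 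Second, the distinguished-boundary hypothesis $\varphi(\overline{\disk^2})\cap\mathbb{T}^2=\emptyset$ is exactly the statement $E_1\cap E_2=\emptyset$. As $E_1,E_2$ are closed, hence compact, and disjoint, I would fix $\delta_0>0$ and disjoint neighborhoods $U_1\supset E_1$, $U_2\supset E_2$ in $\overline{\disk^2}$ on which $|\varphi_2|\le 1-\delta_0$, respectively $|\varphi_1|\le 1-\delta_0$, while on $\overline{\disk^2}\setminus(U_1\cup U_2)$ both $|\varphi_1|,|\varphi_2|\le 1-\delta_0$. Equivalently, the compact image $K=\varphi(\overline{\disk^2})$ stays a fixed distance from $\mathbb{T}^2$, touching $b\disk^2$ only along the open faces.

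For sufficiency I would estimate $V_\varphi(S^{\theta_1}_{r_1}\times S^{\theta_2}_{r_2})$ and show the Carleson ratio tends to $0$ as either radius shrinks. When both radii are at most $\delta_0$, the window lies in $\{|w_1|>1-\delta_0\}\times\{|w_2|>1-\delta_0\}$, which is disjoint from $K$, so $V_\varphi=0$. The substantive regime is $r_1\to 0$ with $r_2$ bounded below (and symmetrically): here $\{|\varphi_1|>1-r_1\}\subset U_1$ for small $r_1$, so $V_\varphi(S^{\theta_1}_{r_1}\times S^{\theta_2}_{r_2})\le \mu_1(S^{\theta_1}_{r_1})$, where $\mu_1=(\varphi_1)_\ast(V|_{U_1})$ is the pushforward to the disc. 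Since $V(S^{\theta_1}_{r_1}\times S^{\theta_2}_{r_2})$ is at least a constant multiple of $r_1^2$ in this regime, everything reduces to the one-variable bound $\mu_1(S^{\theta}_{r})=o(r^2)$ uniformly in $\theta$ — precisely the assertion that $\mu_1$ is a vanishing Carleson measure on $\disk$ — which I verify in the next step and which then closes the estimate.

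The one-variable bound is where the real work lies, and I expect it to be the main obstacle. Pure Lipschitz continuity does not suffice — the projection $\varphi_1(z)=z_1$ is Lipschitz yet has $\mu_1(S^\theta_r)$ comparable to $r^2$ — so the argument must combine holomorphicity with the fact, secured above, that $|\varphi_1|=1$ only on the thin set $E_1\subset\mathbb{T}^2$. I would establish a Hopf/Julia-type lower bound showing that near $E_1$ the quantity $1-|\varphi_1(z)|$ dominates the squared distance to $E_1$ in the directions transverse to $\mathbb{T}^2$, the curvature of the target disc forcing quadratic rather than linear approach. This makes the sublevel sets $\{|\varphi_1|>1-r\}$ so thin that their volume, localized to a sector of angular width $r$, is $o(r^2)$; the Lipschitz hypothesis enters to control the variation of $\arg\varphi_1$ and to ensure $E_1$ is genuinely closed in $\overline{\disk^2}$.

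For necessity I would argue the contrapositive and exhibit windows on which the Carleson ratio stays bounded away from $0$. If the distinguished-boundary condition fails, there is $\zeta\in\mathbb{T}^2$ with $\varphi(\zeta)=\tau\in\mathbb{T}^2$; Lipschitz continuity gives $1-|\varphi_j(z)|\le C\|z-\zeta\|_E$, so a whole ball about $\zeta$ is carried into a window about $\tau$, yielding $V_\varphi(S^{\theta_1}_{r_1}\times S^{\theta_2}_{r_2})\ge c\,V(S^{\theta_1}_{r_1}\times S^{\theta_2}_{r_2})$ for arbitrarily small radii and contradicting the vanishing condition. If instead the face condition fails, there is a face point $p=(\eta,w_0)$ with $\eta\in\mathbb{T}$, $w_0\in\disk$, and, say, $|\varphi_1(p)|=1$; freezing the interior coordinate $z_2=w_0$ reduces matters to a one-variable symbol touching $\mathbb{T}$, and combining the MacCluer--Shapiro mechanism \cite{MacCluerShapiro1986} in $z_1$ with the positive-measure freedom in $z_2$ again produces windows of non-vanishing ratio. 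In both cases Theorem~\ref{thmjaf} converts the failure of the vanishing Carleson condition into non-compactness, completing the contrapositive.
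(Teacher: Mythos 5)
Your overall skeleton (Jafari's criterion in both directions) matches the sufficiency half of the paper, and your ball argument for the case $\varphi(\zeta)\in\mathbb{T}^2$ with $\zeta\in\mathbb{T}^2$ is a clean, complete alternative to the paper's route. But the heart of the sufficiency direction --- the uniform estimate $V\bigl(\varphi_1^{-1}(S^\theta_r)\bigr)=o(r^2)$ --- is exactly where your argument has a genuine gap. The proposed mechanism, a Julia-type inequality $1-|\varphi_1(z)|\geq c\,d(z,E_1)^2$ transverse to $\mathbb{T}^2$, is never established, and even granted it does not close the estimate: a quadratic lower bound is \emph{weaker} than a linear one and makes the sublevel set $\{|\varphi_1|>1-r\}$ thicker, not thinner --- it only confines it to a tube of radius about $\sqrt{r}$ around $E_1$. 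Since $E_1\subseteq\mathbb{T}^2$ can be two-real-dimensional (for $\varphi_1=z_1z_2$ one has $E_1=\mathbb{T}^2$ and the approach is in fact linear, $1-|z_1z_2|\approx(1-|z_1|)+(1-|z_2|)$), such a tube has volume comparable to $r$, and the angular localization $|\arg\varphi_1-\theta|<r/2$ contributes at best one more factor of $r$. That lands at $O(r^2)$, which is the bounded Carleson estimate, not the vanishing one you need. The paper obtains the missing $o(1)$ factor by a different mechanism: since $\varphi_1^{-1}(b\disk)\subseteq\mathbb{T}^2$ and $\varphi_1$ is continuous on $\overline{\disk^2}$, the preimage $\varphi_1^{-1}(S^{\theta}_{r})$ lies in a shell $\{\eta(r,\theta)\leq|z_1|\leq 1\}\times\disk$ with $\eta(r,\theta)\to1$ uniformly in $\theta$ as $r\to0^+$; for each frozen $z_1$ the slice $\{z_2:\varphi_1(z_1,z_2)\in S^\theta_r\}$ has area at most $C\|C_{\varphi_1(z_1,\cdot)}\|^2V(S^\theta_r)$ by a uniform one-variable Carleson inequality (Lemma~\ref{Lem1}), with the norms controlled via $|\varphi_1(z_1,0)|\leq\lambda<1$. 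The product of the $o(1)$ shell thickness and the $O(r^2)$ slice bound gives $o(r^2)$; nothing in your sketch produces that extra $o(1)$.

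The face case of your necessity argument is also incomplete. Non-compactness of the single slice map $\psi=\varphi_1(\cdot,w_0)$ gives a lower bound only on the two-dimensional measure of $\{z_1:\psi(z_1)\in S^\theta_r\}$; to bound the four-dimensional measure $V_\varphi(S^{\theta_1}_{r_1}\times S^{\theta_2}_{r_2})$ from below you still need (i) that slice lower bound uniformly for $z_2$ in a fixed neighborhood of $w_0$, and (ii) a second window that actually captures $\varphi_2(z_1,z_2)$, which stays in a compact subset of $\disk$ there, so $r_2$ must be taken comparable to $1$ and the angular constraint $|\arg\varphi_2-\theta_2|<r_2/2$ must be handled (e.g.\ by pigeonholing over finitely many sectors). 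None of this is carried out. The paper bypasses it entirely: Proposition~\ref{Prop*} (compactness forces $K_{\varphi(z)}(\varphi(z))/K_z(z)\to0$ at the boundary, via weak null convergence of normalized kernels on hyperconvex domains) yields $d_{b\disk}(\varphi_1(z_1,q))/d_{b\disk}(z_1)\to\infty$ along a normal approach to $p$, which contradicts the Lipschitz bound $d_{b\disk}(\varphi_1(p_k,q))\leq|\varphi_1(p_k,q)-\varphi_1(p,q)|\leq N|p_k-p|$ in one line. I would adopt that route for necessity and concentrate on repairing the sufficiency estimate.
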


In the example below, we show that without the Lipschitz 
condition, the forward direction in Theorem \ref{ThmBidisc} fails. 
However, the other direction still holds (see Corollary \ref{Cor2}).
\begin{example} 
The function $\xi+1$ maps the unit disc to $U$, the disc 
centered at 1 with radius 1. Then $\sqrt{\xi}$ maps $U$ into 
$\{\xi\in U: |Arg(\xi)|<\pi/4\}$. Let us define $\phi(\xi)=\sqrt{\xi+1}-1$. 
Then $\phi$ is a holomorphic self-map of the unit disc that is 
continuous but not Lipschitz on $\overline{\disk}$, $\phi'(-1)=\infty$, 
and $\phi(\overline{\disk})\cap \mathbb{T}=\{-1\}$. 

Next we will show that $C_{\phi}$ is compact on $A^2(\disk)$ by 
showing that the angular derivative does not exist at any point 
in $\mathbb{T}$.  Since $\phi(\overline{\disk})\cap \mathbb{T}=\{-1\}$, 
it suffices to show that the angular derivative is infinite at $\xi=-1$.
Define $\xi_k=-1+k^{-1}$.  Now we can compute
\[\frac{1-|\phi(\xi_k)|^2}{1-|\xi_k|^2} 
=\frac{\sqrt{k}\left(2-\frac{1}{\sqrt{k}}\right)}{2-\frac{1}{k}}\to \infty\]
as $k\to \infty$. Thus by \cite{MacCluerShapiro1986},
$C_{\phi}$ is compact on $A^2(\disk)$. Then by 
Theorem \ref{thmjaf} we have 
\[\lim_{r\to 0^+} \sup_{\theta\in[0,2\pi]}
\frac{V_{\phi}(S_r^{\theta})}{V(S_r^{\theta})}=0.\]

Next we define $\varphi(z_1,z_2)=(\phi(z_1),\phi(z_2))$. Then 
$\varphi$ is a holomorphic self-map of $\disk^2$ such that 
each component  is continuous on $\overline{\disk^2}$
but neither component is Lipschitz on $\overline{\disk^2}$.  
Furthermore, 
$\varphi(\overline{\disk^2})\cap \mathbb{T}^2\neq \emptyset$. 
Then 
\begin{align*}
\lim_{r_1,r_2\to 0^+} \sup_{(\theta_1,\theta_2)\in[0,2\pi]^2}
\frac{V_{\varphi}(S_{r_1}^{\theta_1}\times S_{r_2}^{\theta_2} )}{
	V(S_{r_1}^{\theta_1}\times S_{r_2}^{\theta_2})} 
= \left(\lim_{r_1\to 0^+} \sup_{\theta_1\in[0,2\pi]}
\frac{V_{\phi}(S_{r_1}^{\theta_1})}{V(S_{r_1}^{\theta_1})}\right) 
\left(\lim_{r_2\to 0^+} \sup_{\theta_2\in[0,2\pi]}
\frac{V_{\phi}(S_{r_2}^{\theta_2})}{V(S_{r_2}^{\theta_2})}\right)=0.
\end{align*} 
Finally, we use  Theorem \ref{thmjaf} and conclude that 
$C_{\varphi}$ is compact on $A^2(\disk^2)$. 
\end{example} 

In the next section, we will prove our main result, Theorem \ref{ThmBidisc}.  
In the last section, we will state a necessary condition for 
compactness of composition operators on $C^2$-smooth 
bounded pseudoconvex domains in $\C^n$. 

\section*{Proof of Theorem \ref{ThmBidisc}}
In the proof of Theorem \ref{ThmBidisc} we will use the following proposition. 
A result with a similar flavor appeared in \cite[Theorem 4]{StessinZhu2006} 
for the weighted Bergman spaces on the polydisc. For general notions in 
several complex variables, such as pseudoconvexity and hyperconvexity, 
we refer the reader to the books 
\cite{ChenShawBook,JarnickiPflugBook2ndEd,KrantzBook,RangeBook}.

We note that even though \cite[Lemma 1]{RodriguezSahutoglu2024} is stated 
for bounded pseudoconvex domains with Lipschitz boundary 
(hence hyperconvex), the proof works for bounded hyperconvex 
domains as well. Therefore, we state the following slight 
generalization without a proof.

\begin{lemma}\label{LemWeakConv}
Let $\D$ be a bounded hyperconvex domain in $\C^n$ and $p\in b\D$. 
Then $k_z\to 0$ weakly in $A^2(\D)$ as $z\to p$. 
\end{lemma}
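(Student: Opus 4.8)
The plan is to show that the normalized Bergman kernels $k_z$ of a bounded hyperconvex domain $\D$ converge weakly to zero as $z$ approaches a boundary point $p\in b\D$. Since $\{k_z\}$ is a family of unit vectors in $A^2(\D)$, and the claim is about weak convergence, the standard strategy is to verify the two ingredients of weak convergence to zero: first, that $\|k_z\|$ stays bounded (here it is identically $1$, so this is automatic), and second, that $\langle f, k_z\rangle \to 0$ for every $f$ in a dense subset of $A^2(\D)$. Because a uniformly bounded net that converges against a dense set converges weakly, it suffices to test against a convenient dense family.

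My first step would be to recall the reproducing identity: for any $f\in A^2(\D)$,
\[
\langle f, k_z\rangle = \frac{\langle f, K_z\rangle}{\sqrt{K_z(z)}} = \frac{f(z)}{\sqrt{K_z(z)}}.
\]
Thus the entire problem reduces to understanding the growth of the denominator $\sqrt{K_z(z)} = \|K_z\|$ as $z\to p$, together with the boundedness of the numerator $f(z)$.

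The key step, and the main obstacle, is to prove that the diagonal Bergman kernel blows up at the boundary, that is $K_z(z)\to\infty$ as $z\to p\in b\D$. This is exactly where hyperconvexity enters. The plan is to use the existence of a bounded plurisubharmonic exhaustion function $\rho:\D\to[-1,0)$ with $\rho(z)\to 0$ as $z\to b\D$, which is guaranteed by hyperconvexity. One then constructs, for a test function $f$ localized near $p$ (for instance a suitable peak-type function or a bump built from $\rho$), a competitor in the extremal characterization
\[
K_z(z) = \sup\left\{|g(z)|^2 : g\in A^2(\D),\ \|g\|\le 1\right\}
\]
whose value at $z$ stays bounded below while its $A^2$ norm tends to zero as $z\to p$; this forces $K_z(z)\to\infty$. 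Concretely, I would exploit the exhaustion $\rho$ to produce holomorphic functions concentrating mass near $p$, via a $\overline{\partial}$-solution with weights in Hörmander's $L^2$ estimates, which is the technical heart of the argument and the part most likely to require care.

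Finally, I would combine the pieces. For $f$ in the dense subset consisting of functions bounded on $\overline{\D}$ (or polynomials, or functions holomorphic on a neighborhood of $\overline{\D}$, which are dense in $A^2(\D)$), the numerator $|f(z)|$ stays bounded as $z\to p$, while $\sqrt{K_z(z)}\to\infty$; hence $\langle f, k_z\rangle\to 0$ for all such $f$. Since $\|k_z\|=1$ for all $z$, the standard density argument then upgrades this to $\langle f, k_z\rangle\to 0$ for every $f\in A^2(\D)$, which is precisely weak convergence $k_z\to 0$. The only genuinely delicate point is establishing $K_z(z)\to\infty$ from hyperconvexity alone, without smoothness of the boundary; this is presumably where the cited argument of \cite{RodriguezSahutoglu2024}, adapted from Lipschitz to general hyperconvex boundaries, does its work.
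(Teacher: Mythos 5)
Your overall framework---unit vectors, the reproducing identity $\langle f,k_z\rangle=f(z)/\sqrt{K_z(z)}$, reduction to a dense subset---is the standard one, and the ingredient $K_z(z)\to\infty$ as $z\to b\D$ is indeed true for bounded hyperconvex domains: it is exactly \cite[Theorem 12.4.4]{JarnickiPflugBook2ndEd}, which the paper invokes elsewhere, so you need not reconstruct it via H\"ormander weights. The genuine gap is the density step. For a general bounded hyperconvex domain, none of the classes you propose is known to be dense in $A^2(\D)$. Polynomials already fail for the annulus $\{1/2<|\xi|<1\}$, which is hyperconvex (take $\max(\log|\xi|,\,\log(1/2)-\log|\xi|)$ as a negative subharmonic exhaustion) but whose Bergman space contains $1/\xi$, orthogonal to every polynomial and hence at positive distance from their closed span. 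Density of $\mathcal{O}(\overline{\D})$, or of bounded holomorphic functions, in $A^2(\D)$ is a delicate and in general unresolved matter for domains with no boundary regularity beyond a continuous plurisubharmonic exhaustion. Even the one dense family that is always available, the linear span of $\{K_w:w\in\D\}$, does not close the argument along your route: $\langle K_w,k_z\rangle=K_w(z)/\sqrt{K_z(z)}$ and $K_w$ need not be bounded near $b\D$, so ``bounded numerator over exploding denominator'' does not apply. In short, $K_z(z)\to\infty$ alone does not yield the lemma; the hard point is precisely to control $|f(z)|$ against $\sqrt{K_z(z)}$ for an \emph{arbitrary} $f\in A^2(\D)$.

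The paper itself gives no proof but defers to \cite[Lemma 1]{RodriguezSahutoglu2024}, and the argument there avoids density altogether. By Herbort's estimate (with B\l{}ocki's refinement), for a bounded pseudoconvex domain one has $|f(z)|^2/K_z(z)\le C_n\int_{\{g_{\D}(\cdot,z)<-1\}}|f|^2\,dV$ for every $f\in A^2(\D)$, where $g_{\D}$ is the pluricomplex Green function; hyperconvexity enters exactly to guarantee that the sublevel sets $\{g_{\D}(\cdot,z)<-1\}$ have volume tending to $0$ as $z\to p$, so absolute continuity of $\int|f|^2\,dV$ gives $\langle f,k_z\rangle\to 0$ for all $f$ simultaneously. (Equivalently, since $\|k_z\|=1$ it suffices to prove pointwise convergence $k_z(w)\to 0$, which is the case $f=K_w$ and again needs such a localization.) To repair your proof, either justify a density statement for the specific domains at hand or replace the dense-set step with an estimate of this type.
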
	 

\begin{proposition}\label{Prop*} 
Let $\D$ be a bounded hyperconvex domain in $\C^n$  and  $\varphi=(\varphi_1,\ldots,\varphi_n):\D\to \D$ be a holomorphic 
self-map such that $\varphi_j\in C(\overline{\D})$ for all $j$.  
Assume that $C_{\varphi}$ is compact on $A^2(\D)$. Then 
\[\lim_{\D\ni z\to b\D}\frac{K_{\varphi(z)}(\varphi(z))}{K_z(z)}=0.\]
\end{proposition}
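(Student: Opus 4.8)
The plan is to exploit the reproducing property of the Bergman kernel to convert the quotient $K_{\varphi(z)}(\varphi(z))/K_z(z)$ into something expressible through the action of $C_\varphi$ on normalized kernels. Recall that $K_z(z)=\|K_z\|^2$ and $k_z = K_z/\|K_z\|$. The key observation is that evaluating the composition operator's adjoint, or more directly testing $C_\varphi$ against kernel functions, produces exactly this ratio.

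\begin{proof}[Proof sketch]
First I would compute $C_\varphi^* K_z$. For $f \in A^2(\D)$ and $z \in \D$ we have
\[
\langle f, C_\varphi^* K_z\rangle = \langle C_\varphi f, K_z\rangle = (C_\varphi f)(z) = f(\varphi(z)) = \langle f, K_{\varphi(z)}\rangle,
\]
so $C_\varphi^* K_z = K_{\varphi(z)}$. Next I would pass to normalized kernels. Dividing by $\|K_z\|=\sqrt{K_z(z)}$ gives $C_\varphi^* k_z = K_{\varphi(z)}/\sqrt{K_z(z)}$, and therefore
\[
\|C_\varphi^* k_z\|^2 = \frac{\|K_{\varphi(z)}\|^2}{K_z(z)} = \frac{K_{\varphi(z)}(\varphi(z))}{K_z(z)}.
\]
This identity is the heart of the matter: the quantity in the statement is precisely $\|C_\varphi^* k_z\|^2$.

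The remaining step is to show this tends to $0$ as $z \to b\D$. Since $\D$ is bounded hyperconvex, Lemma \ref{LemWeakConv} gives $k_z \to 0$ weakly in $A^2(\D)$ as $z \to p$ for any $p \in b\D$. Because $C_\varphi$ is compact, so is its adjoint $C_\varphi^*$; and a compact operator maps weakly convergent sequences to norm convergent ones. Hence $C_\varphi^* k_z \to C_\varphi^*(0) = 0$ in norm as $z \to p$. Since this holds for every boundary point $p$, and the boundary is compact, I would conclude $\|C_\varphi^* k_z\|^2 \to 0$ as $z \to b\D$, which is exactly the claimed limit.

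The main obstacle I anticipate is the passage from pointwise-in-$p$ weak convergence to a uniform statement as $z \to b\D$ (that is, independent of which boundary point $z$ approaches). The cleanest way to handle this is by contradiction: if the limit fails, extract a sequence $z_k \to b\D$ with $\|C_\varphi^* k_{z_k}\|^2$ bounded below; by compactness of $\overline{\D}$ pass to a subsequence with $z_k \to p \in b\D$; apply Lemma \ref{LemWeakConv} to get $k_{z_k} \to 0$ weakly, then use compactness of $C_\varphi^*$ to force $\|C_\varphi^* k_{z_k}\| \to 0$, contradicting the lower bound. I note that the continuity hypothesis $\varphi_j \in C(\overline{\D})$ is not needed for this particular argument beyond ensuring $C_\varphi$ is well-behaved; the compactness of $C_\varphi$ does all the work.
\end{proof}
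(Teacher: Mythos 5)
Your proof is correct, and it takes a genuinely different (and in fact cleaner) route than the paper's. The paper works on the other side of the duality: for a sequence $p_j\to p\in b\D$ it splits into two cases according to whether $\varphi(p)\in\D$ or $\varphi(p)\in b\D$; in the first case it uses $K_{p_j}(p_j)\to\infty$ while the numerator stays bounded, and in the second it applies Lemma \ref{LemWeakConv} to the normalized kernels $k_{\varphi(p_j)}$ at the \emph{image} points, pushes them through $C_\varphi$, and bounds the ratio from above via the Cauchy--Schwarz estimate $\|C_\varphi k_{\varphi(p_j)}\|\geq |\langle k_{\varphi(p_j)}\circ\varphi, k_{p_j}\rangle| = (K_{\varphi(p_j)}(\varphi(p_j))/K_{p_j}(p_j))^{1/2}$. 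This is where the hypothesis $\varphi_j\in C(\overline{\D})$ enters: it guarantees $\varphi(p_j)\to\varphi(p)$ so that the case split and the weak convergence of $k_{\varphi(p_j)}$ make sense. Your argument instead computes $C_\varphi^* k_z$ exactly and observes that the quantity in question is precisely $\|C_\varphi^* k_z\|^2$ (an identity, not just a lower bound), then applies Lemma \ref{LemWeakConv} to the kernels at the \emph{source} points together with compactness of $C_\varphi^*$ (Schauder) and the standard fact that compact operators turn weak convergence into norm convergence; the subsequence/contradiction step you describe correctly upgrades the pointwise-in-$p$ statement to the limit as $z\to b\D$. What your approach buys is the elimination of both the case analysis and the continuity hypothesis on $\varphi$ up to $\overline{\D}$ --- your observation that compactness of $C_\varphi$ alone suffices is accurate, so you have in fact proved a slightly stronger statement than the one in the paper.
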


\begin{proof}
Assume that $C_{\varphi}$ is compact, $p\in b\D$, and $\{p_j\}\subset \D$  
so that $p_j\to p$ as $j\to\infty$. If $\varphi(p)\in \D$ then 
$K_{\varphi(p_j)}(\varphi(p_j))/K_{p_j}(p_j)\to 0$ because 
$K_{p_j}(p_j)\to \infty$ (see \cite[Theorem 12.4.4]{JarnickiPflugBook2ndEd})  
while $K_{\varphi(p_j)}(\varphi(p_j))$ stays bounded. On the other hand, if 
$\varphi(p)\in b\D$ then, by  Lemma \ref{LemWeakConv}, 
$k_{\varphi(p_j)}\to 0$ weakly as $j\to\infty$.  Then 
\[\|k_{\varphi(p_j)}\circ \varphi\| 
\geq \left|\left\langle k_{\varphi(p_j)}\circ \varphi, k_{p_j}\right\rangle\right|
=\left(\frac{K_{\varphi(p_j)}(\varphi(p_j))}{K_{p_j}(p_j)}\right)^{1/2}.\] 
Also compactness of $C_{\varphi}$ implies that 
$k_{\varphi(p_j)}\circ \varphi =C_{\varphi}k_{\varphi(p_j)}\to 0$ in 
$A^2(\D)$ as $j\to\infty$. 
Therefore,  the proof of the proposition is complete. 
\end{proof}

We don't know if converse of Proposition \ref{Prop*} is true, in general. 
However, at the end of the paper in Corollary \ref{Cor1}, we will show 
that the converse of Proposition \ref{Prop*} is true on the bidisc for 
Lipschitz symbols.

The following lemma is probably known to experts, but we provide a 
proof for the convenience of the reader. 
\begin{lemma}\label{Lem1} 
There exist $C>0$ such that 
\[V_{\varphi}(S^{\theta}_r)\leq C \|C_{\varphi}\|^2 V(S^{\theta}_r)\]
for any holomorphic self-map $\varphi:\disk\to\disk$ and 
$0<r\leq 1, 0\leq \theta\leq 2\pi$.
\end{lemma}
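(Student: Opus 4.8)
The plan is to test the operator $C_{\varphi}$ on the normalized Bergman kernels of the disc and to read off the Carleson estimate from the change-of-variables formula defining $V_{\varphi}$. On $\disk$ one has $K_a(z)=1/(\pi(1-\bar a z)^2)$, hence $k_a(z)=(1-|a|^2)/(\sqrt{\pi}(1-\bar a z)^2)$ with $\|k_a\|=1$. Since in one variable every composition operator is bounded, we may assume $\|C_{\varphi}\|<\infty$, as otherwise the asserted inequality is trivial.

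First I would record the identity
\[\|C_{\varphi} k_a\|^2=\int_{\disk} |k_a(\varphi(z))|^2\,dV(z)=\int_{\overline{\disk}} |k_a(w)|^2\,dV_{\varphi}(w),\]
which is immediate from $V_{\varphi}(E)=V(\varphi^{-1}(E))$. Bounding the left-hand side by $\|C_{\varphi}\|^2\|k_a\|^2=\|C_{\varphi}\|^2$ gives
\[\int_{\overline{\disk}} |k_a(w)|^2\,dV_{\varphi}(w)\le \|C_{\varphi}\|^2 \qquad (a\in\disk).\]

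Next, for $0<r\le r_0$ with $r_0$ a fixed small constant, I would take $a=(1-r)e^{i\theta}$, so that $1-|a|^2=r(2-r)$ is comparable to $r$. Writing $w=\rho e^{i\psi}\in S_r^{\theta}$ and estimating the radial factor $1-(1-r)\rho$ and the angular factor $|1-e^{i(\psi-\theta)}|$ separately, one checks that $|1-\bar a w|\le c_1 r$ for an absolute constant $c_1$, whence $|k_a(w)|^2\ge c_2 r^{-2}$ on $S_r^{\theta}$. Restricting the integral above to $S_r^{\theta}$ then yields $c_2 r^{-2}V_{\varphi}(S_r^{\theta})\le \|C_{\varphi}\|^2$; combined with the elementary evaluation showing that $V(S_r^{\theta})$ is comparable to $r^2$, this gives the asserted inequality for all $r\le r_0$.

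For $r_0<r\le 1$ the bound is immediate: there $V(S_r^{\theta})$ is bounded below by a positive constant (by monotonicity of $S_r^{\theta}$ in $r$), while $V_{\varphi}(S_r^{\theta})\le V_{\varphi}(\overline{\disk})=V(\disk)=\pi$, and since $C_{\varphi}1=1$ forces $\|C_{\varphi}\|\ge 1$, the claim follows after enlarging $C$. I expect the only delicate point to be the uniform lower bound $|k_a(w)|^2\ge c_2 r^{-2}$ on $S_r^{\theta}$: one must verify that the comparison constants in this bound and in $V(S_r^{\theta})\approx r^2$ can be chosen independently of both $\theta$ and $r\le r_0$.
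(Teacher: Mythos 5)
Your proposal is correct and follows essentially the same route as the paper: both arguments test $C_{\varphi}$ on the normalized kernel $k_a$ with $1-|a|=r$ and $a$ adjacent to $S^{\theta}_r$, establish the uniform lower bound $|k_a|^2\gtrsim r^{-2}$ on $S^{\theta}_r$, and combine $\int_{\overline{\disk}}|k_a|^2\,dV_{\varphi}\leq\|C_{\varphi}\|^2$ with $V(S^{\theta}_r)=r^2-r^3/2$. The only differences are cosmetic (you place $a$ at angle $\theta$ rather than $\theta+r/2$ and treat $r$ near $1$ by a separate trivial bound, whereas the paper verifies the kernel estimate uniformly for all $0<r\leq 1$), and your flagged ``delicate point'' is handled correctly since $|1-\bar a w|\leq 5r/2$ on $S^{\theta}_r$ with constants independent of $\theta$.
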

\begin{proof}
Let $0<r\leq 1$ and $a=(1-r)e^{i(\theta+r/2)}$. Then for 
$z=\rho e^{i\alpha} \in S^{\theta}_r$ we have   	
\begin{align*}
|1- a\zb|^2
=&\, (1-|a|\rho\cos(\theta+r/2-\alpha))^2+|a|^2\rho^2\sin^2(\theta+r/2-\alpha) \\
=&\,1+|a|^2\rho^2-2|a|\rho\cos(\theta+r/2-\alpha). 
\end{align*}
To get the maximum for $|1- a\zb|^2$ on $\overline{S^{\theta}_r}$ we need to 
choose $z_0\in \overline{S^{\theta}_r}$ so that $a\zb_0$ has the largest  
possible argument $r$ and smallest norm $(1-r)^2$.  Then we choose 
$z_0=(1-r)e^{i(\theta-r/2)}$. Hence, for $z \in S^{\theta}_r$ we have 
\[|1- a\zb|^2\leq |1- a\zb_0|^2=  1+|a|^4-2|a|^2\cos(1-|a|).\]
One can check that there exists $C_1>0$ such that 
\[0\leq  1+|a|^4-2|a|^2\cos(1-|a|)\leq C_1(1-|a|)^2\]
for $0\leq |a|\leq 1$. 
Hence, for $0<r\leq 1$ and $z\in S^{\theta}_r$, the normalized kernel 
has the following estimate
\[ |k_a(z)|=\frac{1-|a|^2}{|1- a\zb|^2} 
\geq \frac{1-|a|^2}{ C_1(1-|a|)^2}
\geq\frac{1}{ C_1(1-|a|)}=\frac{1}{ C_1r}.\]
Therefore, there exists $C_2>0$ independent of $a$ and $\theta$ such that 
\[ |k_a(z)|^2 \geq \frac{C_2}{(1-|a|)^2}=\frac{C_2}{r^2}\]
for $z\in S^{\theta}_r$ and $0<r\leq 1$. 
Next, we use the assumption that $C_{\varphi}$ is bounded. 
\[\frac{C_2V_{\varphi}(S^{\theta}_r)}{r^2}\leq \int_{S^{\theta}_r}|k_a(z)|^2dV_{\varphi} 
\leq \int_{\disk}|k_a(z)|^2dV_{\varphi} 
\leq  \|C_{\varphi}\|^2\int_{\disk}|k_a(z)|^2dV(z)=\|C_{\varphi}\|^2.\] 
Then, using the fact $V(S^{\theta}_r)= r^2-r^3/2$, we conclude that there exists 
a constant $C>0$ independent of $\varphi, r,$ and $\theta$ such that 
\[V_{\varphi}(S^{\theta}_r)\leq C\|C_{\varphi}\|^2 V(S^{\theta}_r)\]
for $0<r\leq 1$ and $0\leq\theta\leq 2\pi$. We note that $\|C_{\varphi}\|<\infty$ 
by Littlewood Subordination Theorem. 
\end{proof}

\begin{proof}[Proof of Theorem \ref{ThmBidisc}]
First, we assume that $C_{\varphi}$ is compact and both $\varphi_1$ and $\varphi_2$ 
are Lipschitz on $\overline{\mathbb{D}^2}$.  Furthermore, let us assume that 
$\varphi (\overline{\disk^2})\cap \mathbb{T}^2\neq \emptyset$. 
Let  $\varphi(p,q)\in \mathbb{T}^2$ for some $(p,q)\in b\mathbb{D}^2$.  
There are several cases to consider.  

First we consider the case where $(p,q)\in \mathbb{T}^2$.  That is, $|p|=1$ 
and $|q|=1$.  Without loss of generality, by multiplying by unimodular 
constants (which will not affect the compactness of $C_{\varphi}$), 
we may assume that $p=q=1$.  Let us define $p_k=p-1/k$.  Then 
\[d_{b\mathbb{D}}(p_k)=|p-p_k|=\frac{1}{k}.\] 
By Proposition \ref{Prop*}, we have
\[0=\lim_{k\to \infty}
\frac{K^{\mathbb{D}}_{\varphi_1(p_k,p_k)}(\varphi_1(p_k,p_k))
K^{\mathbb{D}}_{\varphi_2(p_k,p_k)}(\varphi_2(p_k,p_k))}{
\left(K^{\mathbb{D}}_{p_k}(p_k)\right)^2}.\]  
So, without loss of generality, we may assume that
\[0=\liminf_{k\to \infty}
\frac{K^{\mathbb{D}}_{\varphi_1(p_k,p_k)}(\varphi_1(p_k,p_k))}{
	K^{\mathbb{D}}_{p_k}(p_k)}
= \liminf_{k\to \infty}
\left(\frac{d_{b\mathbb{D}}(p_k)}{d_{b\mathbb{D}}(\varphi_1(p_k,p_k))}\right)^2.\]
Therefore, 
\begin{align}\label{EqnLimSup}
\limsup_{k\to \infty}
\frac{d_{b\mathbb{D}}(\varphi_1(p_k,p_k))}{d_{b\mathbb{D}}(p_k)}=\infty.
\end{align}
The function $\varphi_1$ is Lipschitz continuous on $\overline{\mathbb{D}^2}$ 
by assumption.  Hence for some $M>0$ we have
\[d_{b\mathbb{D}}\circ \varphi_1(p_k,p_k)
\leq |\varphi_1(p_k,p_k)-\varphi_1(p,p)|\leq M|p-p_k|.\]  
This is a contradiction to \eqref{EqnLimSup} as $|p-p_k|=d_{b\mathbb{D}}(p_k)$.   
Hence $\varphi(\mathbb{T}^2)\cap \mathbb{T}^2=\emptyset$. 

Now consider the case where $p\in b\mathbb{D}$ and $q\in \mathbb{D}$ 
where $\varphi(p,q)\in \mathbb{T}^2$. Without loss of generality, we 
assume that $p=1$. We define $(p_k,q_k)=(1-1/k,q)\in \mathbb{D}^2$ 
for $k\in \mathbb{N}$.  Then as before, we can write
\[\lim_{k\to \infty}\frac{d_{b\mathbb{D}}\circ 
\varphi_1(p_k,q)d_{b\mathbb{D}}\circ 
\varphi_2(p_k,q)}{d_{b\mathbb{D}}(p_k)d_{b\mathbb{D}}(q)}=\infty.\]  
Since
$d_{b\mathbb{D}}(q)>0$, we have
\[\lim_{k\to \infty}\frac{d_{b\mathbb{D}} 
	\circ \varphi_1(p_k,q)}{d_{b\mathbb{D}}(p_k)}=\infty.\]
Again using the fact that
$\varphi_1$ is Lipschitz on $\overline{\mathbb{D}^2}$, we 
have, for some $N>0$,
\[d_{b\mathbb{D}}\circ \varphi_1(p_k,q)
\leq |\varphi_1(p_k,q)- \varphi_1(p,q)| \leq N |p-p_k|.\]
As before, this is also a contradiction.  Therefore, we can conclude that
$\varphi(\overline{\mathbb{D}^2})\cap \mathbb{T}^2=\emptyset$.

Next we assume that $C_{\varphi}$ is compact and 
$\varphi(\overline{\mathbb{D}^2}\setminus \mathbb{T}^2)
\cap b\disk^2\neq\emptyset$. 
The fact that $\varphi$ is a self-map of $\disk^2$ implies that 
$\varphi(b\mathbb{D}^2\setminus \mathbb{T}^2)\cap b\disk^2\neq \emptyset$. 
Then there exists $(p,q)\in b\mathbb{D}^2\setminus \mathbb{T}^2$ 
so that $\varphi(p,q)\in b\mathbb{D}^2$. Without loss of generality, 
we may assume that $p\in b\mathbb{D}$ and $q\in \mathbb{D}$. 
Also, by the first part of this proof, we may assume that
$\varphi_1(p,q)\in b\mathbb{D}$ and $\varphi_2(p,q)\in \mathbb{D}$.
Then by Proposition \ref{Prop*}, we have
\[\lim_{\mathbb{D}^2\ni (z_1,z_2)\to (p,q)}
\frac{K^{\mathbb{D}^2}_{\varphi(z_1,z_2)}(\varphi(z_1,z_2))}{
	K^{\mathbb{D}^2}_{(z_1,z_2)}(z_1,z_2)}=0.\]
Thus we have
\[\lim_{\mathbb{D}\ni (z_1,z_2)\to (p,q)}
\frac{d_{b\mathbb{D}}(\varphi_1(z_1,z_2))d_{b\mathbb{D}} 
(\varphi_2(z_1,z_2))}{d_{b\mathbb{D}}(z_1)d_{b\mathbb{D}}(z_2)}
=\infty.\]
By assumption, 
\[\frac{d_{b\mathbb{D}}(\varphi_2(z_1,z_2))}{d_{b\mathbb{D}}(z_2)}\] 
is bounded for $(z_1,z_2)$ near $(p,q)$. Fixing $z_2=q$, we can conclude that
\[\lim_{\mathbb{D}\ni z_1\to p}
\frac{d_{b\mathbb{D}}(\varphi_1(z_1,q))}{d_{b\mathbb{D}}(z_1)}=\infty.\]
Now  we let $\nu$ be the unit outward normal vector at $p$ and 
define $p_k=p-\frac{\nu}{k}$.  It is clear that 
$d_{b\mathbb{D}}(p_k)=|p_k-p|=\frac{1}{k}$.  Then we have
\[\lim_{k\to \infty}\frac{d_{b\mathbb{D}}(\varphi_1(p_k,q))}{|p_k-p|}=\infty.\]  
This is a contradiction since we have the following estimate.
\[d_{b\mathbb{D}}\circ \varphi_1(p_k,q)\leq |\varphi_1(p_k, q)-\varphi_1(p,q)|
\leq N|p_k-p|=N\,d_{b\mathbb{D}}(p_k).\] 
Hence, 
$\varphi(\overline{\mathbb{D}^2}\setminus \mathbb{T}^2) 
	\cap b\mathbb{D}^2=\emptyset$. 
Therefore, we showed that if $C_{\varphi}$ is compact then 
$\varphi (\overline{\disk^2})\cap \mathbb{T}^2= \emptyset$ and
$\varphi(\overline{\mathbb{D}^2}\setminus \mathbb{T}^2) 
	\cap b\mathbb{D}^2=\emptyset$. 

Next to prove the converse we will assume that 
$\varphi (\overline{\disk^2})\cap \mathbb{T}^2= \emptyset$ and
$\varphi(\overline{\mathbb{D}^2}\setminus \mathbb{T}^2) 
	\cap b\mathbb{D}^2=\emptyset$. 
Then we will  use Jafari's condition, Theorem \ref{thmjaf}, 
to prove that $C_{\varphi}$ is compact. First we note that   
$\varphi(\overline{\mathbb{D}^2}\setminus \mathbb{T}^2) 
	\subset \mathbb{D}^2$  and hence 
$\varphi_j(\overline{\mathbb{D}^2}\setminus \mathbb{T}^2) 
	\subset \mathbb{D}$ for $j=1,2$. 
 
If $\varphi(\overline{\disk^2})\subset \disk^2$ then, using Cauchy's formula, 
one can show that $C_{\varphi}$ is compact. Without loss of generality, let 
$\Gamma_1=\varphi_1^{-1}(b\mathbb{D})\subseteq \varphi^{-1}(b\disk^2) 
	\subseteq \mathbb{T}^2$ be a non-empty set and denote 
\[\Gamma_1(\ep_1)
=\left\{z\in \overline{\mathbb{D}^2}:d_{\Gamma_1}(z)<\ep_1\right\}.\] 
Since $\varphi_1(\overline{\mathbb{D}^2}\setminus \Gamma_1(\ep_1))$ is 
a compact subset of $\mathbb{D}$ for all $\ep_1>0$,  there exists 
$\ep_2>0$ such that 
$U_{\ep_2}^{\theta_1}=\varphi_1^{-1}(S_{\ep_2}^{\theta_1})
\subset \Gamma_1(\ep_1)$ 
for  $0\leq \theta_1\leq 2\pi$.  Then we conclude that 
\[\eta(\ep,\theta_1)=\inf\left\{|z_1|:(z_1,z_2)\in U_{\ep}^{\theta_1}\right\}
\geq 1-\ep_1\]
for $0< \ep\leq \ep_2$ and all $\theta_1\in[0,2\pi]$. Namely, for 
$\ep_1>0$ there exists $\ep_2>0$ such that 
$1-\ep_1\leq \eta(\ep,\theta_1)\leq 1$ for  $0< \ep\leq \ep_2$. 
Hence $\eta(\ep,\theta_1)\to 1$ uniformly in $\theta_1$ as  $\ep\to 0^+$.

Since we assumed that 
$\varphi(\overline{\mathbb{D}^2})\cap \mathbb{T}^2=\emptyset$, 
it suffices to show Jafari's compactness condition is satisfied. We will 
apply Theorem \ref{thmjaf} to $C_{\varphi}$ for $r_1\to 0^+$ while 
$r_2>0$ fixed. If both $r_1$ and $r_2$ go to $0^+$, Jafari's condition 
is satisfied vacuously since 
$\varphi^{-1}(S_{r_1}^{\theta_1}\times S_{r_2}^{\theta_2})=\emptyset$ 
for all $r_1>0$ and all $r_2>0$ sufficiently small. 

We note that 
\[\varphi^{-1}(S_{r_1}^{\theta_1}\times S_{r_2}^{\theta_2})
= \varphi_1^{-1}(S_{r_1}^{\theta_1})\cap \varphi_2^{-1}(S_{r_2}^{\theta_2}).\]
For $z_1\in\overline{\disk}$  we denote 
$U_{r_1z_1}^{\theta_1}=\{z_2\in \disk:(z_1,z_2)\in U_{r_1}^{\theta_1}\}$,  
where $U_{r_1}^{\theta_1}=\varphi_1^{-1}(S_{r_1}^{\theta_1})$. 
In the last inequality below, we use Lemma \ref{Lem1} to get a universal constant 
$C>0$ independent $\varphi_1, z_1,r_1$ and $\theta_1$.  
\begin{align*}
V(\varphi^{-1}(S^{\theta_1}_{r_1}\times S^{\theta_2}_{r_2})) 
=&\int_{\varphi^{-1}(S^{\theta_1}_{r_1}\times S^{\theta_2}_{r_2})}dV (z_1,z_2)\\
\leq & \int_{\varphi_1^{-1}(S^{\theta_1}_{r_1})}dV (z_1,z_2)\\
=&\int_{U_{r_1}^{\theta_1}}dV(z_1,z_2) \\
= &\, \int_{\eta(r_1,\theta_1)\leq |z_1|\leq 1} 
	\int_{z_2\in U_{r_1z_1}^{\theta_1}}dV(z_2)dV(z_1)\\
=&\, \int_{\eta(r_1,\theta_1)\leq |z_1|\leq 1}
	V(U_{r_1z_1}^{\theta_1})dV(z_1)\\
\leq &\, \int_{\eta(r_1,\theta_1)\leq |z_1|\leq 1}C 
\|C_{\varphi_1(z_1,\cdot)}\|^2  V(S_{r_1}^{\theta_1})  dV(z_1).
\end{align*}
We note that $\varphi_1(\cdot,0)$ maps $\overline{\disk}$ into 
$\disk$. Hence, there exists $0\leq \lambda<1$ such that 
$|\varphi_1(z_1,0)|\leq \lambda$ for all $|z_1|\leq 1$. 
By \cite[Theorem 11.6]{ZhuBook},   
\[\|C_{\varphi_1(z_1,\cdot)}\| 
\leq \frac{1+|\varphi_1(z_1,0)|}{1-|\varphi_1(z_1,0)|}
\leq \frac{1+\lambda}{1-\lambda}\] 
for all $|z_1|\leq 1$. Then we get 
\begin{align*}
V(\varphi^{-1}(S^{\theta_1}_{r_1}\times S^{\theta_2}_{r_2})) 
\leq& \,  \int_{\eta(r_1,\theta_1)\leq |z_1|\leq 1}C 
\|C_{\varphi_1(z_1,\cdot)}\|^2  V(S_{r_1}^{\theta_1})  dV(z_1)\\
\leq&\, \pi C(1-\eta(r_1,\theta_1)^2) 
 V(S_{r_1}^{\theta_1})\left(\frac{1+\lambda}{1-\lambda}\right)^2. 
\end{align*} 
Since $r_2>0$ is fixed, there exists $D>0$ independent of $r_1,\theta_1$, 
and $\theta_2$ such that
\[\frac{V_{\varphi}(S^{\theta_1}_{r_1}\times S^{\theta_2}_{r_2})}{ 
V(S_{r_1}^{\theta_1}\times S_{r_2}^{\theta_2})} 
\leq D(1-\eta(r_1,\theta_1)^2) \left(\frac{1+\lambda}{1-\lambda}\right)^2.\]
Since $\eta(r_1,\theta_1)\to 1$ uniformly in $\theta_1$ as $r_1\to 0^+$, 
$V_{\varphi}$ is a vanishing Carleson measure and 
we conclude that $C_{\varphi}$ is compact because Jafari's condition in  
Theorem \ref{thmjaf} is satisfied.
\end{proof}

The first part of the proof of Theorem \ref{ThmBidisc} shows that if 
$K_{\varphi(z)}(\varphi(z))/K_z(z)\to 0$ as $z\to b\disk^2$ then 
$\varphi(\overline{\mathbb{D}^2})\cap \mathbb{T}^2=\emptyset$ and 
$\varphi(\overline{\mathbb{D}^2}\setminus \mathbb{T}^2)
\cap b\mathbb{D}^2=\emptyset$. Then Theorem \ref{ThmBidisc} implies 
that $C_{\varphi}$ is compact on $A^2(\mathbb{D}^2)$.  
Hence we have the following corollary. 

\begin{corollary}\label{Cor1} 
Let $\varphi=(\varphi_1,\varphi_2):\mathbb{D}^2\to \mathbb{D}^2$ be a 
holomorphic self-map such that both $\varphi_1$ and $\varphi_2$ are 
Lipschitz  on $\overline{\mathbb{D}^2}$.  Then $C_{\varphi}$ is 
compact on  $A^2(\mathbb{D}^2)$ if and only if
\[\lim_{\disk^2\ni z\to b\disk^2}\frac{K_{\varphi(z)}(\varphi(z))}{K_z(z)}=0.\]
\end{corollary}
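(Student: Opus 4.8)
The plan is to prove the two implications separately and to observe that both have, in substance, already been carried out inside the proof of Theorem \ref{ThmBidisc}. For the forward implication I would simply invoke Proposition \ref{Prop*}. The bidisc $\disk^2$ is a bounded hyperconvex domain in $\C^2$, and since $\varphi_1,\varphi_2$ are Lipschitz on $\overline{\disk^2}$ they are in particular continuous there, so every hypothesis of the proposition is satisfied. Hence compactness of $C_{\varphi}$ yields the limit
\[\lim_{\disk^2\ni z\to b\disk^2}\frac{K_{\varphi(z)}(\varphi(z))}{K_z(z)}=0\]
with no further work.

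For the converse, the key point is that the necessity half of the proof of Theorem \ref{ThmBidisc} never used compactness of $C_{\varphi}$ directly: it used compactness only as an input to Proposition \ref{Prop*}, that is, only through the displayed limit. I would therefore assume the limit condition and rerun the same case analysis. If $\varphi(p,q)\in\mathbb{T}^2$ for some $(p,q)\in b\disk^2$, the limit forces $\limsup_{k}d_{b\disk}(\varphi_1(p_k,p_k))/d_{b\disk}(p_k)=\infty$ (or the analogous statement at a boundary point with one interior coordinate), which contradicts the Lipschitz estimate $d_{b\disk}\circ\varphi_1(p_k,p_k)\leq M|p-p_k|=M\,d_{b\disk}(p_k)$. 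The same contradiction rules out the possibility $\varphi(\overline{\disk^2}\setminus\mathbb{T}^2)\cap b\disk^2\neq\emptyset$. Thus the limit together with the Lipschitz hypothesis yields the two geometric conditions $\varphi(\overline{\disk^2})\cap\mathbb{T}^2=\emptyset$ and $\varphi(\overline{\disk^2}\setminus\mathbb{T}^2)\cap b\disk^2=\emptyset$. With these in hand I would invoke the sufficiency half of Theorem \ref{ThmBidisc}, which shows precisely that these two conditions imply $C_{\varphi}$ is compact on $A^2(\disk^2)$, completing the converse.

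Since every step quotes machinery already developed, I do not anticipate a genuine obstacle; the statement is essentially a repackaging of Proposition \ref{Prop*} and Theorem \ref{ThmBidisc}. The one subtle point worth flagging is exactly the factorization just used: one must check that the necessity argument in Theorem \ref{ThmBidisc} depends on compactness \emph{only} via the conclusion of Proposition \ref{Prop*}, so that replacing the phrase ``$C_{\varphi}$ is compact'' by ``the limit condition holds'' leaves that argument intact. Once this bookkeeping is confirmed, the corollary follows immediately.
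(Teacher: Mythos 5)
Your proposal is correct and follows exactly the paper's route: the forward direction is Proposition \ref{Prop*} applied to the (hyperconvex) bidisc, and the converse rests on the observation that the necessity half of the proof of Theorem \ref{ThmBidisc} uses compactness only through the limit $K_{\varphi(z)}(\varphi(z))/K_z(z)\to 0$, so that limit alone yields the two geometric conditions, after which the sufficiency half of Theorem \ref{ThmBidisc} gives compactness. The ``subtle point'' you flag is precisely the remark the authors make before stating the corollary.
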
 

In the second part of the proof of Theorem \ref{ThmBidisc}, we observe that 
continuity of the symbol on $\overline{\mathbb{D}^2}$ is sufficient. 
Hence we have the following corollary. 

\begin{corollary}\label{Cor2}
Let $\varphi=(\varphi_1,\varphi_2):\mathbb{D}^2\to \mathbb{D}^2$ be a 
holomorphic self-map such that both $\varphi_1$ and $\varphi_2$ are 
continuous on $\overline{\mathbb{D}^2}$.  Assume that 
 $\varphi(\overline{\mathbb{D}^2})\cap \mathbb{T}^2=\emptyset$ and 
$\varphi(\overline{\mathbb{D}^2}\setminus \mathbb{T}^2)
\cap b\mathbb{D}^2=\emptyset$. Then   $C_{\varphi}$ is compact on 
$A^2(\mathbb{D}^2)$. 
\end{corollary}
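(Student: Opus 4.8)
The plan is to observe that the converse direction of Theorem \ref{ThmBidisc}---namely that the two geometric conditions force $C_{\varphi}$ to be compact---was established using only the continuity of $\varphi_1$ and $\varphi_2$ on $\overline{\disk^2}$, and never the Lipschitz bound. The Lipschitz hypothesis entered only in the forward (necessity) direction, where it produced the contradiction with \eqref{EqnLimSup} and its analogues. I would therefore reproduce the converse argument and point out at each step that continuity is all that is required.

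Concretely, I assume $\varphi(\overline{\disk^2})\cap\mathbb{T}^2=\emptyset$ and $\varphi(\overline{\disk^2}\setminus\mathbb{T}^2)\cap b\disk^2=\emptyset$, and I aim to verify Jafari's vanishing Carleson condition from Theorem \ref{thmjaf}. First I note that the second hypothesis gives $\varphi(\overline{\disk^2}\setminus\mathbb{T}^2)\subset\disk^2$, so each $\varphi_j$ maps $\overline{\disk^2}\setminus\mathbb{T}^2$ into $\disk$. If $\varphi(\overline{\disk^2})\subset\disk^2$ outright, compactness follows from Cauchy's formula exactly as in the theorem. Otherwise, after relabeling, $\Gamma_1=\varphi_1^{-1}(b\disk)$ is a nonempty subset of $\mathbb{T}^2$, and for the thickening $\Gamma_1(\ep_1)$ the set $\overline{\disk^2}\setminus\Gamma_1(\ep_1)$ is compact.

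The one step to justify carefully is that $\varphi_1(\overline{\disk^2}\setminus\Gamma_1(\ep_1))$ is a compact subset of $\disk$; this is precisely the place where the theorem's proof invoked a property of $\varphi_1$. Since $\varphi_1(z)\in b\disk$ forces $z\in\Gamma_1\subseteq\Gamma_1(\ep_1)$, no point of $\overline{\disk^2}\setminus\Gamma_1(\ep_1)$ maps to $b\disk$; as this set is compact and $\varphi_1$ is continuous, its image is a compact subset of $\disk$. This uses continuity alone. Consequently, as in the theorem, the preimages $U^{\theta_1}_{\ep}=\varphi_1^{-1}(S^{\theta_1}_{\ep})$ lie inside $\Gamma_1(\ep_1)$ once $\ep$ is small, which forces $\eta(\ep,\theta_1)\to 1$ uniformly in $\theta_1$ as $\ep\to 0^+$.

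With this uniform concentration in hand, the remainder is identical to the converse part of the proof of Theorem \ref{ThmBidisc}: slicing in $z_1$, applying Lemma \ref{Lem1} to the one-variable operators $C_{\varphi_1(z_1,\cdot)}$, and bounding their norms uniformly via $|\varphi_1(z_1,0)|\le\lambda<1$ together with \cite[Theorem 11.6]{ZhuBook}, yields, for fixed $r_2>0$,
\[\frac{V_{\varphi}(S^{\theta_1}_{r_1}\times S^{\theta_2}_{r_2})}{V(S^{\theta_1}_{r_1}\times S^{\theta_2}_{r_2})}\leq D\bigl(1-\eta(r_1,\theta_1)^2\bigr)\left(\frac{1+\lambda}{1-\lambda}\right)^2.\]
Since $\eta(r_1,\theta_1)\to 1$ uniformly in $\theta_1$, the measure $V_{\varphi}$ is a vanishing Carleson measure, and Theorem \ref{thmjaf} gives compactness of $C_{\varphi}$. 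The only genuine obstacle is the verification already discussed---that continuity suffices for the compact containment $\varphi_1(\overline{\disk^2}\setminus\Gamma_1(\ep_1))\subset\disk$---and once that is granted, the estimate carries over word for word.
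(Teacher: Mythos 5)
Your proposal is correct and matches the paper's own route: the paper proves this corollary precisely by remarking that the converse half of the proof of Theorem \ref{ThmBidisc} uses only continuity of the symbol, the Lipschitz hypothesis being needed only for necessity. Your extra care in checking that continuity alone gives the compact containment $\varphi_1(\overline{\mathbb{D}^2}\setminus\Gamma_1(\ep_1))\subset\mathbb{D}$ is exactly the right point to isolate, and the rest carries over verbatim.
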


Below we give a simple example satisfying the conditions of 
Theorem \ref{ThmBidisc}. 

\begin{example}\label{ExBidisc} 
Let $\varphi(z_1,z_2)=(z_1/2, z_1z_2)$ be a self-map of the bidisc. One can 
easily see that $\varphi(z_1,z_2)\in b\disk^2$ if and only if $|z_1|=|z_2|=1$. 
Furthermore, $\varphi(\overline{\disk^2})\cap \mathbb{T}^2=\emptyset$. 
Therefore, by Theorem \ref{ThmBidisc}, $C_{\varphi}$ is compact. 
\end{example}

\section*{Smooth Pseudoconvex Domains}
In this section, we state a simple theorem on $C^2$-smooth bounded 
pseudoconvex domains in $\C^n$. We expect that heavier several 
complex variables techniques would be needed to get better results. 
The following theorem is a consequence of 
\cite[Theorem 3.5.1]{Hormander1965}. 

\begin{theoremA}[H\"ormander]\label{ThmHormanderEst}
Let $\D$ be a $C^2$-smooth bounded pseudoconvex domain in $\C^n$. 
Assume that $p\in b\D$ is a strongly pseudoconvex point. 
Then there exist an open neighborhood $U$ of $p$ and  $C>0$ such that 
\[\frac{1}{C(d_{b\D}(z))^{n+1}}\leq K_z(z)\leq  \frac{C}{(d_{b\D}(z))^{n+1}}\]
for $z\in \D\cap U$. 
\end{theoremA}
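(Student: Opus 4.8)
The plan is to read off the two-sided comparison directly from H\"ormander's precise asymptotic for the Bergman kernel on the diagonal. The first step is to exploit that strong pseudoconvexity is an open condition: since $p$ is a strongly pseudoconvex point and $b\D$ is $C^2$, the Levi form is positive definite at $p$ and hence, by continuity of the defining data, remains positive definite at every boundary point in a sufficiently small neighborhood $U_0$ of $p$. Shrinking $U_0$ and using compactness of $b\D\cap\overline{U_0}$, I obtain uniform positive lower and upper bounds for the determinant of the Levi form over this boundary piece.

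Next I would invoke \cite[Theorem 3.5.1]{Hormander1965}, which asserts that at each strongly pseudoconvex boundary point $q$ the kernel satisfies $d_{b\D}(z)^{n+1}K_z(z)\to c_q$ as $\D\ni z\to q$, where $c_q>0$ is a positive multiple of $\det L_q$, the determinant of the Levi form at $q$. For $z\in\D$ close to the boundary I let $q=q(z)$ be the nearest boundary point, so that $d_{b\D}(z)=\|z-q\|_E$; the previous step bounds $c_q$ between two positive constants independent of $q\in b\D\cap U_0$. Combining this with the asymptotic relation gives, on a possibly smaller neighborhood $U\subseteq U_0$ and for $z$ within some distance $\delta$ of $b\D$, the estimate $\frac{1}{C}d_{b\D}(z)^{-(n+1)}\le K_z(z)\le C\,d_{b\D}(z)^{-(n+1)}$.

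It then remains to treat the set of $z\in\D\cap U$ bounded away from $b\D$, that is, with $d_{b\D}(z)\ge\delta$. There both quantities are harmless: $K_z(z)=\|K_z\|^2$ is continuous and strictly positive on $\D$, while $d_{b\D}(z)$ is bounded between two positive constants on this region, so after enlarging $C$ the desired inequality persists. Patching the two regimes together yields the claim on $\D\cap U$.

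The main obstacle I anticipate is uniformity. H\"ormander's statement furnishes a limit at a single boundary point and a priori along a fixed (normal) approach, whereas the conclusion requires a two-sided comparison that is uniform as the base point $q$ ranges over the strongly pseudoconvex boundary piece and that allows $z$ to approach $b\D$ from arbitrary directions. To secure this I would either appeal to the uniform version of H\"ormander's expansion over a compact family of strongly pseudoconvex points, or replace the uniformity argument by a localization-and-monotonicity scheme: using $K^{\D_1}(z,z)\ge K^{\D_2}(z,z)$ for $\D_1\subseteq\D_2$ together with interior and exterior osculating balls, in suitable local holomorphic coordinates flattening $b\D$ to a strictly convex hypersurface, to sandwich $K_z(z)$ between the diagonal kernels of two balls, each explicitly comparable to $d_{b\D}(z)^{-(n+1)}$.
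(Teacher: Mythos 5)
The paper offers no proof of this statement at all: it is presented purely as a citation of H\"ormander's Theorem 3.5.1, which is exactly the route you take, so your proposal matches the paper's (implicit) argument. Your fleshing-out of the uniformity issue is sound --- since strong pseudoconvexity is open, one may shrink $U$ so that every point of $b\D\cap\overline{U}$ is strongly pseudoconvex, and then a compactness/contradiction argument (a sequence $z_j$ violating the two-sided bound would accumulate at some strongly pseudoconvex $q_0\in b\D\cap\overline{U}$, contradicting H\"ormander's limit $d_{b\D}(z)^{n+1}K_z(z)\to c_{q_0}\in(0,\infty)$) together with the trivial interior estimate completes the proof.
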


\begin{corollary}\label{CorGenEstimate} 
Let $\D$ be a $C^2$-smooth bounded domain in $\C^n$. 
Then there exists $C>0$ such that 
\[K_z(z)\leq  \frac{C}{(d_{b\D}(z))^{n+1}}\]
for $z\in \D$. 
\end{corollary}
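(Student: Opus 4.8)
The plan is to obtain the upper bound by comparing $\D$ from inside with a ball of fixed radius, exploiting the variational description of the Bergman kernel together with the interior ball condition furnished by $C^2$-smoothness. The starting point is the monotonicity of the kernel under inclusion: from the reproducing property one has $K^{U}_w(w)=\sup\{|f(w)|^2:f\in A^2(U),\ \|f\|_{L^2(U)}\le 1\}$, so if $U\subseteq \D$ and $w\in U$, then restricting competitors from $\D$ to $U$ only shrinks their norm, giving $K^{\D}_w(w)\le K^{U}_w(w)$. It therefore suffices, for each $z\in\D$ near $b\D$, to exhibit a ball $B\subseteq\D$ with $z\in B$ and $d_{bB}(z)=d_{b\D}(z)$, and to bound $K^{B}_z(z)$.

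Next I would use the smoothness to produce such balls uniformly. Since $b\D$ is $C^2$ and $\D$ is bounded, $\D$ satisfies a uniform interior ball condition: there is $\rho>0$ so that for every $p\in b\D$ the ball $B_p=B(p-\rho\nu_p,\rho)$, with $\nu_p$ the outward unit normal at $p$, satisfies $B_p\subseteq\D$ and is tangent to $b\D$ at $p$. Given $z\in\D$ with $d:=d_{b\D}(z)<\rho$, let $p$ be a nearest boundary point; then $z=p-d\nu_p\in B_p$ and the distance from $z$ to $bB_p$ equals $\rho-|z-(p-\rho\nu_p)|=\rho-(\rho-d)=d=d_{b\D}(z)$, as required.

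Then I would estimate $K^{B_p}_z(z)$. A ball is a $C^2$-smooth bounded pseudoconvex domain whose every boundary point is strongly pseudoconvex, so Theorem \ref{ThmHormanderEst} applies to $B_p$; because all the balls $B_p$ are congruent (translates and rotations of one another), the neighborhood size and the constant produced are uniform in $p$, yielding $K^{B_p}_z(z)\le C_1\, d_{bB_p}(z)^{-(n+1)}=C_1\, d^{-(n+1)}$ once $d$ is small enough. Equivalently, one may scale the explicit unit-ball kernel $\tfrac{n!}{\pi^n(1-|w|^2)^{n+1}}$, which makes the uniformity transparent. Combined with the monotonicity above this gives $K^{\D}_z(z)\le C_1\, d_{b\D}(z)^{-(n+1)}$ for all $z$ with $d_{b\D}(z)$ below some threshold. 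For the remaining points, the set $\{z\in\D:d_{b\D}(z)\ge \text{threshold}\}$ is compact, $z\mapsto K^{\D}_z(z)$ is continuous there, and $d_{b\D}$ is bounded above on the bounded domain $\D$; hence $K^{\D}_z(z)\,d_{b\D}(z)^{n+1}$ is bounded on this set by some $C_2$, and $C=\max\{C_1,C_2\}$ completes the argument. Note that no pseudoconvexity of $\D$ is needed, consistent with the fact that only the upper bound is asserted.

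The main obstacle I expect lies in the uniformity claims of the middle steps: one must check that $C^2$-smoothness yields a single radius $\rho$ that works at every boundary point, and that the local estimate of Theorem \ref{ThmHormanderEst} can be upgraded to a bound on each $B_p$ with a constant independent of $p$. The first is the standard uniform interior sphere property of bounded $C^2$ domains, and the second is harmless precisely because the competing balls are all congruent, so that scaling the unit-ball kernel bypasses any potential nonuniformity in the constants coming from Theorem \ref{ThmHormanderEst}.
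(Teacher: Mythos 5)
Your proposal is correct and follows essentially the same route as the paper: monotonicity of the diagonal Bergman kernel under inclusion, the uniform interior ball condition furnished by $C^2$-smoothness, the explicit (or Hörmander-type) bound on a ball of fixed radius with $d_{bB}(z)=d_{b\D}(z)$, and a compactness argument for points away from the boundary. Your added remarks on the uniformity of the radius and of the constant (via congruence of the balls or scaling the explicit unit-ball kernel) only make explicit what the paper leaves implicit.
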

\begin{proof}
First we note that if $U\subset \D$ is a domain then $K^{\D}_z(z)\leq K^U_z(z)$. 
Second, since $\D$ is $C^2$-smooth and bounded, there exists a ball 
$B$ centered at the origin such that for any $w\in b\D$ there exists 
$\widetilde{w}\in \D$ such that $\widetilde{w}+B\subset \D$ 
and $\{w\}=\overline{(\widetilde{w}+B)}\cap b\D$. 
Then there exists $C>0$ such that for $z\in \D$ sufficiently close to 
$b\D$, there exists $\widetilde{w}\in \D$ such that 
$z\in \widetilde{w}+B\subset \D$ and 
$d_{b(\widetilde{w}+B)}(z)=d_{b\D}(z)$. Then 
\[K_z(z)\leq  K^{\widetilde{w}+B}_z(z)
\leq \frac{C}{(d_{b(\widetilde{w}+B)}(z))^{n+1}}
=\frac{C}{(d_{b\D}(z))^{n+1}}.\] 
Finally, since $K_z(z)$ is bounded on compact  subsets of $\D$, 
we can choose $C>0$  so that 
\[K_z(z)\leq  \frac{C}{(d_{b\D}(z))^{n+1}}\]
$z\in \D$. 
\end{proof}

We  define $\mathcal{S}_{\D}$ to be the set of strongly pseudoconvex
points in $b\D$. The theorem below seem to suggest that, in case of 
$C^2$-smooth bounded pseudoconvex domains in $\C^n$, 
strongly pseudoconvex points should play the role of the 
distinguished boundary in the bidisc.

\begin{theorem}\label{ThmStrongPsdxPoints}
Let $\D$ be a  $C^2$-smooth bounded pseudoconvex 
domain in $\C^n$ and  $\varphi=(\varphi_1,\ldots,\varphi_n):\D\to \D$ 
be a holomorphic self-map such that $\varphi_j$ is Lipschitz on 
$\overline{\D}$ for all $j$. Assume that $C_{\varphi}$ is compact 
on $A^2(\D)$. Then $\varphi(b\D)\cap \mathcal{S}_{\D}=\emptyset$.	
\end{theorem}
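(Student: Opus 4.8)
The plan is to argue by contradiction. Suppose there is a point $p\in b\D$ with $\varphi(p)\in \mathcal{S}_{\D}$; since each $\varphi_j$ is Lipschitz on $\overline{\D}$, the map $\varphi$ extends continuously to $\overline{\D}$, so $\varphi(p)$ is a well-defined strongly pseudoconvex boundary point. Because $\D$ is a $C^2$-smooth bounded pseudoconvex domain it is hyperconvex, so Proposition \ref{Prop*} applies and compactness of $C_{\varphi}$ yields
\[\lim_{\D\ni z\to b\D}\frac{K_{\varphi(z)}(\varphi(z))}{K_z(z)}=0.\]
I will produce a sequence $z=p_k\to p$ along which this ratio stays bounded away from zero, giving the contradiction.

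I would approach $p$ along the inner normal. Let $\nu$ be the unit outward normal to $b\D$ at $p$ and set $p_k=p-\nu/k$. By $C^2$-smoothness the nearest-point projection is well defined near $b\D$, so $p_k\in\D$ and $d_{b\D}(p_k)=1/k$ for all large $k$. By Corollary \ref{CorGenEstimate} there is $C_1>0$ with
\[K_{p_k}(p_k)\leq \frac{C_1}{(d_{b\D}(p_k))^{n+1}}=C_1 k^{n+1}.\]
Next I would invoke the lower bound at the image point. Since $\varphi(p_k)\to\varphi(p)\in\mathcal{S}_{\D}$, for large $k$ the point $\varphi(p_k)$ lies in the Hörmander neighborhood $U$ of $\varphi(p)$ from Theorem \ref{ThmHormanderEst}, so for some $C_2>0$,
\[K_{\varphi(p_k)}(\varphi(p_k))\geq \frac{1}{C_2(d_{b\D}(\varphi(p_k)))^{n+1}}.\]

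The decisive quantitative input is Lipschitz continuity. Since $\varphi(p)\in b\D$, the distance $d_{b\D}(\varphi(p_k))$ is at most $\|\varphi(p_k)-\varphi(p)\|_E\leq L\|p_k-p\|_E=L/k$, where $L$ is a Lipschitz constant for $\varphi$. Combining the three displays,
\[\frac{K_{\varphi(p_k)}(\varphi(p_k))}{K_{p_k}(p_k)}\geq \frac{1}{C_1C_2\,\bigl(k\,d_{b\D}(\varphi(p_k))\bigr)^{n+1}}\geq \frac{1}{C_1C_2 L^{n+1}}>0,\]
which contradicts the limit furnished by Proposition \ref{Prop*}. Hence no such $p$ exists and $\varphi(b\D)\cap \mathcal{S}_{\D}=\emptyset$.

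The step I expect to require the most care is the matching of exponents: the lower bound from Hörmander's estimate and the universal upper bound from Corollary \ref{CorGenEstimate} both carry the power $n+1$ in $d_{b\D}$, and it is precisely this matching---together with the Lipschitz bound $d_{b\D}(\varphi(p_k))\lesssim d_{b\D}(p_k)$---that keeps the ratio bounded below. The only auxiliary point worth verifying is that $\D$ is hyperconvex, so that Proposition \ref{Prop*} (and the weak convergence in Lemma \ref{LemWeakConv}) is applicable; this is standard for $C^2$-smooth bounded pseudoconvex domains.
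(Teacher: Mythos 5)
Your argument is correct and follows essentially the same route as the paper: both use Proposition \ref{Prop*} to force $K_{\varphi(z)}(\varphi(z))/K_z(z)\to 0$, then combine H\"ormander's lower bound at the strongly pseudoconvex image point $\varphi(p)$ with the upper bound of Corollary \ref{CorGenEstimate} at $z$, and derive a contradiction from the Lipschitz estimate $d_{b\D}(\varphi(z))\lesssim \|z-p\|$. Your explicit choice of the normal sequence $p_k=p-\nu/k$ (so that $\|p_k-p\|=d_{b\D}(p_k)$) just makes concrete the step the paper leaves implicit when it asserts the contradiction with the Lipschitz property.
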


\begin{proof}
For the sake of obtaining a contradiction, assume that 
$C_{\varphi}$ is compact on $A^2(\D)$ and $\varphi(p)\in b\D$ is a 
strongly pseudoconvex point for some $p\in b\D$.  
Then $b\D$ is strongly pseudoconvex near $\varphi(p)$. Then, using 
Theorem \ref{ThmHormanderEst} and Corollary \ref{CorGenEstimate},
there exists $C>0$ such that  for $z\in \D$ sufficiently close to $p$ we have  
\[\frac{K_z(z)}{K_{\varphi(z)}(\varphi(z))} 
\leq C\left( \frac{d_{b\D}(\varphi(z))}{d_{b\D}(z)}\right)^{n+1}.\]
Then we have, by Proposition \ref{Prop*},
\[\lim_{\D\ni z\to p} \frac{K_z(z)}{K_{\varphi(z)}(\varphi(z))}=\infty.\]
Hence 
\[\lim_{\D\ni z\to p}\frac{d_{b\D}(\varphi(z))}{d_{b\D}(z)}=\infty.\]
This contradicts the Lipschitz property of $\varphi_j$ on $\overline{\D}$ 
for some $j$.
\end{proof}

We finish the paper by constructing an example to show that the 
converse of Theorem \ref{ThmStrongPsdxPoints} is false.

\begin{figure}[h] 
\scalebox{1.1}{
\begin{tikzpicture}
	\draw[thick,->] (0,0) -- (2.5,0) node[anchor=north west] {};
	\draw[thick,->] (0,0) -- (0,2.5) node[anchor=south east] {};
	\node[draw=white] at (0,3) {$|z_2|$};
	\draw[dashed] (0,2) -- (1,2);
	\node[draw=white] at (1,1) {$\Omega$};
	\draw[dashed] (2,0) -- (2,1);
	\draw[dashed] (2,1) arc (0:90:1cm);
	\node[draw=white] at (3,0) {$|z_1|$};
\end{tikzpicture}
}
\caption{The domain in Example \ref{Example2}}
\label{Fig}
\end{figure} 

\begin{example}\label{Example2}
Let us define $\lambda(t)= 0$ for $t\leq 0,$  
$\lambda(t)= e^{-1/t}$ for $t>0$, and 
\[\rho(z_1,z_2)=\lambda\left(\frac{4|z_1|^2-1}{12}\right)
+\lambda\left(\frac{4|z_2|^2-1}{12}\right)-\lambda(1/4).\]
Then we define 
\[ \D=\left\{(z_1,z_2)\in \C^2: \rho(z_1,z_2)<0 \right\}.\]
One can check that $\lambda$ is a convex function on $(-\infty,1/2)$ 
and strictly convex on $(0,1/2)$. Then $\D$ is a $C^{\infty}$-smooth 
bounded convex and Reinhardt domain. We note that $\D$ is contained 
in $\mathbb{D}^2$ and is obtained by smoothing out the distinguished 
boundary $\mathbb{T}^2$ of the bidisc (see Figure \ref{Fig}).   

Let us denote  
\[W_{\D}=\left\{(z_1,z_2)\in \C^2:|z_1|<\frac{1}{2},|z_2|=1\right\}
\cup \left\{(z_1,z_2)\in \C^2:|z_1|=1,|z_2|<\frac{1}{2}\right\}.\] 
Since $\lambda$ is strictly convex on $(0,1/2)$, one can see 
that $\mathcal{S}_{\D}= b\D\setminus\overline{W}_{\D}$.
Let us define $\varphi(z_1,z_2)=\left(\frac{z_1}{2},z_2\right)$. Then  
\[\varphi(b\D)\cap b\D 
\subset \left\{(z_1,z_2)\in \C^2:|z_1|<\frac{1}{2},|z_2|=1\right\} 
\subset W_{\D}.\] 
Therefore, $\varphi(b\D)\cap \mathcal{S}_{\D}=\emptyset$. 

However, $C_{\varphi}$ is not compact. This can be seen as follows. 
Let $p_j=(0, 1-1/j)$. Then 	
\begin{align*}
\frac{K_{p_j}(p_j)}{K_{\varphi(p_j)}(\varphi(p_j))}
	=\frac{K_{p_j}(p_j)}{K_{p_j}(p_j)}=1.
\end{align*}
Hence, by Proposition \ref{Prop*}, we conclude that $C_{\varphi}$ is 
not compact. 
\end{example}

\section*{Acknowledgment}
We are in debt to Trieu Le for noticing a mistake in an earlier version of 
the paper and that the proof of Theorem \ref{ThmBidisc} implies 
Corollary \ref{Cor1}. 
We thank the referee for reading the manuscript carefully 
and suggesting an improvement to Proposition \ref{Prop*}. 

\end{document}